\documentclass[12pt,a4paper, reqno]{amsart}
% Ensure that 99.bib exists in your project directory or update the bibliography
% command to use the correct .bib file name.
\usepackage[utf8]{inputenc}

\usepackage[pdfborder={0 0 0}]{hyperref}

\usepackage{latexsym}

\setlength{\textwidth}{\paperwidth}
\addtolength{\textwidth}{-1.5in}
\calclayout
 
\usepackage{graphicx} % Za slike
\usepackage{amsmath}  % Napredne matematičke formule (npr. equation, align, gather)
\usepackage{amssymb}  % Simboli kao što su \mathbb, \leqslant, itd.
\usepackage{amsthm} % <- amsthm je ključan za \begin{proof}
\usepackage{xcolor}

\usepackage{amsfonts} % Dodatne fontove (npr. \mathbb)
\usepackage{mathtools} % Proširenje amsmath-a (npr. \coloneqq, \DeclarePairedDelimiter)
\usepackage{physics}  % Lako pisanje izvoda, parcijalnih izvoda: \dv, \pdv, \grad, itd.
\usepackage{bm}       % Podebljani matematički simboli: \bm{x}
\usepackage{cancel}   % Za prekrižene izraze (npr. \cancel{x})
\usepackage{bbm}      % Za \mathbbm{1} (indikatorska funkcija)
\usepackage{nicefrac} % Za "lijep" prikaz razlomaka u tekstu: \nicefrac{1}{2}
\usepackage{siunitx}  % Za SI jedinice i brojeve sa formatiranjem
\usepackage{xfrac}    % Još jedan način za fino formatirane razlomke

\theoremstyle{plain}
\newtheorem{theorem}{Theorem}

\newtheorem{corollary}[theorem]{Corollary}

\newtheorem{definition}[theorem]{Definition}

\numberwithin{theorem}{section}
\numberwithin{equation}{section}
\allowdisplaybreaks

\DeclareMathOperator{\sgn}{sgn}
\def\eps{\varepsilon}
\def\pa{\partial}

\DeclareMathOperator{\Div }{div}

\def\pa{\partial}
\def\cal{\mathcal}
\let\mib=\boldsymbol

\def\R{{\bf R}}
\def\N{{\bf N}}

\def\eps{\varepsilon}

%izabrati bolju

\def\mx{{\bf x}}
\def\mxi{{\mib \xi}}

    %   norma, ne zaboravi na $
   %   norma u #2, ne zaboravi na $

\def\mff{{\mathfrak f}}

\def\mff{{\mathfrak f}}
\def\mx{{\bf x}}

\def\pa{\partial}
\def\cal{\mathcal}
\let\mib=\boldsymbol

\def\R{\Bbb{R}}

\def\N{\Bbb{N}}

\def\eps{\varepsilon}
\def\mxi{{\mib \xi}}
\def\mx{{\mib x}}

\def\sgn{\operatorname{sgn}}

\begin{document}

\title[Well-posedness -- discontinuous flux]
{Well posedness for multidimensional scalar conservation laws with discontinuous
 non-degenerate flux}

\author[D.Mitrovic]{D. Mitrovic}
\address{Darko Mitrovic, University of Vienna, 
Faculty of Mathematics, Oskar Morgenstern Platz-1,
1090 Vienna, Austria}
\address{and University of Montenegro, 
Faculty of Science and Mathematics, Cetinjski put bb,
81000 Podgorica, Montenegro}
\email{darko.mitrovic@univie.ac.at}

\subjclass[2020]{35L65; 35L45}
% 35L65 Hyperbolic conservation laws
% 35L45 Initial value problems for first-order hyperbolic systems

\keywords{Conservation law, discontinuous flux, vanishing viscosity; entropy 
conditions}

\begin{abstract}
We show well posedness of the Cauchy problem for scalar conservation laws with 
discontinuous non-degenerate flux which locally have the form
\begin{equation*}
\begin{split}
&\partial_t u + \sum\limits_{k=1}^{d}\partial_{x_k}\left(f^k_L(\mx,u) 
H(-x_j+\zeta(\hat{x}_j)) + f^k_R(\mx,u) H(x_j-\zeta(\hat{x}_j))\right) = 0,\\
& u\big|_{t=0}=u_0, \ \ a \leq u_0 \leq b,  \ \ a,b\in \R, 
\end{split}
\end{equation*} where $\mx\in U\subset \R^d$ and $\hat{x}_j=(x_j,\dots, x_{j-1},x_{j+1}, \dots,x_d)\in \R^{d-1}$ for some open set $U$ and the Heaviside function $H$. The flux functions are such that 
$$
f^k_L, f^k_R\in C^1(\R^{d+1}))  \ \ \text{ for $k=1,\dots, d,$  and }  \ \ f^k_{L,R}(\mx,a)=f^k_{L,R}(\mx,b) =0
$$ while the interface function satisfies $\zeta\in C^1(\R^{d-1})$. 

We also consider the problem of the general form 
\begin{equation*}
\begin{split}
&\pa_t u+\Div \mff(\mx,u)=0, \ \ \mff \in BV(\R^d;C^1(\R)), \ \ 
\mff(\mx,a)=\mff(\mx,b)=0,\\
& u\big|_{t=0}=u_0, \ \ a \leq u_0 \leq b,  \ \ a,b\in \R,
\end{split}
\end{equation*}  and construct a stable semigroup of solutions. 
\end{abstract}

\date{\today}

\maketitle

\section{Introduction}

We consider a multidimensional scalar conservation law with a spatially discontinuous flux, written here in the general form
\begin{equation}\label{eqmain}
\partial_t u + \Div \mff(\mx,u) = 0, \qquad u|_{t=0}=u_0(\mx), \quad \mx\in\R^d, \; t>0,
\end{equation}
where $u(t,\mx)$ is the unknown scalar function, and the flux $\mff:\R^d\times \R \to \R^d$ may have discontinuities in the spatial variable to be specified below. We assume $a\le u_0(\mx)\le b$ for some constants $a,b\in\R$, and that $\mff(\mx,\lambda)$ is of class $C^1$ in the $\lambda$-variable and has bounded variation in the $\mx$-variable (more precisely, $\mff\in BV(\R^d;C^1(\R))^d$). 

Conservation laws of the form \eqref{eqmain} model various physical phenomena, such as traffic flow on networks with variable road conditions, two-phase flow in heterogeneous porous media, sedimentation processes, and so on. When the flux function is spatially homogeneous (or sufficiently smooth in $\mx$), the standard theory of Kru\v{z}kov \cite{Kru} provides a well-posedness framework for the Cauchy problem: there is a unique entropy solution $u(t,\mx)$ satisfying the entropy inequalities and depending continuously on the initial data in $L^1$. However, if $\mff$ has discontinuities in $\mx$, the classical Kru\v{z}kov entropy condition is not enough to single out a unique solution. Indeed, in one space dimension it has been shown that many different admissible solution concepts can be defined for the same discontinuous-flux problem, depending on additional conditions imposed at the points of flux discontinuity (see, e.g., \cite{AKR, crasta1} and references therein). The choice of the “correct” admissibility criterion is often guided by the physical modeling context. One popular selection principle is the \emph{vanishing viscosity} criterion, wherein one declares the physically relevant solution to be the limit of solutions of \eqref{eqmain} with a small viscous term $\eps \Delta u$ added. This approach has the advantage of being tied to a specific modeling assumption (negligible but nonzero diffusion) and has been extensively studied (see \cite{AKR} for the one-dimensional case with a single discontinuity where also a general theory of $L^1$-dissipative solvers was considered).

In multiple space dimensions, the analytical difficulties are even more pronounced. The flux discontinuities may occur along irregular hypersurfaces in $\R^d$, and there is no canonical “Riemann problem” that isolates the behavior at a single interface as in the one-dimensional case. A general existence result for entropy solutions was proved by E.~Yu.~Panov \cite{panov_arma} using the vanishing viscosity method and compactness arguments, under quite broad assumptions on $\mff$. However, without further admissibility conditions, uniqueness of solutions in the multidimensional discontinuous-flux setting is not guaranteed. Recently, several works have contributed to establishing well-posedness under additional structural assumptions. In \cite{AKR}, the authors introduced the concept of an $L^1$-\emph{dissipative germ} to characterize interface entropy conditions in one space dimension (allowing a finite number of flux discontinuities on the real line). In \cite{AM}, we extended some of these ideas to multidimensional laws with flux discontinuous across a finite number of smooth hypersurfaces. We proposed multidimensional entropy conditions (generalizing the one-dimensional “crossing condition” of \cite{KRT}) and proved that, for fluxes with finitely many spatial discontinuity surfaces, the vanishing viscosity limit does indeed converge to a unique entropy solution satisfying those conditions (thus yielding well-posedness). On the other hand, \cite{crasta1} established a uniqueness theory for entropy solutions in several dimensions by developing a kinetic formulation and imposing an adapted entropy condition at flux discontinuities. In a subsequent work, \cite{crasta2}, the authors further analyzed the structure of entropy solutions, proving that any entropy solution (under a genuine nonlinearity hypothesis similar to \eqref{non-deg}) admits strong traces on the flux discontinuity set and satisfies a generalized Kato inequality. These properties were used to deduce uniqueness of solutions, given an appropriate condition relating the traces of any two solutions at the discontinuities.

Despite these advances, a complete well-posedness theory for multidimensional conservation laws with general discontinuous fluxes has remained elusive. In particular, the question of existence of a solution that satisfies a given entropy admissibility criterion (such as the multidimensional entropy inequality of \cite{AM}) was open in full generality, since compactness methods alone do not ensure that the limit of vanishing viscosity approximations satisfies the desired entropy condition. 

In the current contribution, we make a step forward by proving both existence and uniqueness to multidimensional scalar conservation laws with flux discontinuity along a non-intersecting manifolds (see Figure 1). More precisely, beside already introduced conditions on flux, we assume that $\mff$ satisfies the boundary conditions 
\begin{equation}\label{bounded}
\mff(\mx,a)=\mff(\mx,b)=0,
\end{equation} which provide the maximum principle. Moreover, we assume the non-degeneracy condition: for almost every $\mx\in\R^d$ and every direction $\mxi\in S^{d-1}$ (the unit sphere in $\R^d$), the function 
\begin{equation}\label{non-deg}
\lambda \mapsto \mxi\cdot \partial_\lambda \mff(\mx,\lambda)
\end{equation}
is not identically zero on any non-degenerate interval. 

Let us now precise geometrical conditions that the flux should satisfy. Roughly speaking, we assume that the flux discontinuities are confined to a finite number of smooth “interfaces” that do not intersect each other. Specifically, we assume:
\begin{itemize}
\item[(a)] There exist sets $Q_j\subset \R^d$ for $j=1,\dots,d$ such that, for each $j$ and each point $\mx_0\in \R^d\setminus Q_j$, there is a neighborhood $B(\mx_0,r)$ in which the flux $\mff$ can be written in the form
\begin{equation}\label{local-flux}
\begin{split}
\Div \mff(\mx,\lambda) &= \sum_{k=1}^d \partial_{x_k}\Big( f^k_L(\mx,\lambda)\,H(-x_j + \zeta_{j}(\hat{x}_j)) \\
&\qquad\qquad\qquad +\; f^k_R(\mx,\lambda)\,H(x_j - \zeta_{j}(\hat{x}_j))\Big),
\end{split}
\end{equation}
for some $C^2$ functions $f^k_L$ and $f^R_L$, $k=1,\dots,d$, and $C^1$-function $\zeta_{j}:\R^{d-1}\to\R$ defining a smooth hypersurface. Here $H$ is the Heaviside step function, and we write $\hat{x}_j=(x_1,\dots,x_{j-1},x_{j+1},\dots,x_d)\in\R^{d-1}$. (In other words, in the local ball $B(\mx_0,r)$, the flux $\mff$ has a single discontinuity surface given by $\{x_j=\zeta_j(\hat{x}_j)\}$, with $f_L$ and $f_R$ the fluxes on each side of this surface.)
\item[(b)] There exists $\epsilon>0$ such that $\cap_{k=1}^d Q _k^{\epsilon}=\emptyset$ for all $i\neq j$. (Here $Q_j^\epsilon := \{\mx:\, {\rm dist}(\mx,Q_j)<\epsilon\}$ denotes the $\epsilon$-neighborhood of $Q_j$.)
\item[(c)] If two neighborhoods $B(\mx_i,r_i)$ and $B(\mx_j,r_j)$ both contain a portion of the same discontinuity interface $Q_k$, then the representations of that interface in $B(\mx_i,r_i)$ and $B(\mx_j,r_j)$ coincide on $B(\mx_i,r_i)\cap B(\mx_j,r_j)$.
\end{itemize}

\begin{figure}[h]
\label{fig:discont}    \centering
    \includegraphics[width=0.6\textwidth]{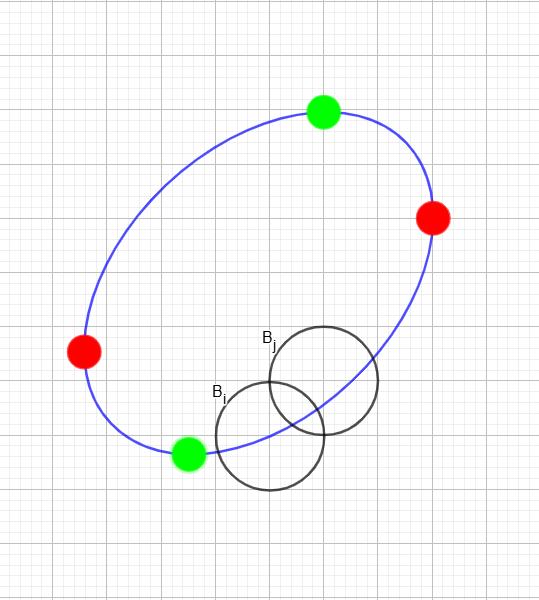}
    \caption{The discontinuity line is the blue ellipse. $B_i$ and $B_j$ are balls in which we have the representation \eqref{local-flux}. Green balls are the set $Q_1^\epsilon$ i.e. they turn into $Q_1$ as $\epsilon\to 0$. Red balls are the set $Q_2^\epsilon$ i.e. they turn into $Q_2$ as $\epsilon\to 0$. }
    \label{fig:jmf}
\end{figure}

A typical configuration satisfying the above requirements is depicted in Figure~\ref{fig:discont}. The blue ellipse represents a discontinuity manifold (in this two-dimensional illustration), while the green circles indicate the set $Q_1^\epsilon$ (which tends to $Q_1$ as $\epsilon \to 0$). The red circles indicate the set $Q_2^\epsilon$ (tending to $Q_2$ as $\epsilon \to 0$). The balls $B_i=B(\mx_i,r_i)$ and $B_j=B(\mx_j,r_j)$ are neighborhoods where the flux can be written in the form \eqref{local-flux}. Their overlap contains a portion of the interface, and assumption (c) ensures that this portion is described by the same function $\zeta$ in both $B_i$ and $B_j$. 

Under assumptions (a)--(c), we are able to show that a kind of vanishing viscosity approximation satisfies the multidimensional entropy admissibility condition derived in \cite{AM} (see inequality \eqref{entropy-ineq} below). Consequently, by the uniqueness theory of \cite{AM, crasta2}, this solution is the unique entropy admissible solution of the Cauchy problem \eqref{eqmain} (for the given flux). In outline, the proof proceeds as follows. For each local region $B(\mx_l,r_l)$ where the flux has the form \eqref{local-flux}, we perform a smooth change of coordinates that “flattens” the discontinuity surface (taking it to a flat interface $\{\tilde{x}_j=0\}$). We then {\em radially extend} the flux so that the Lipschitzity and the flat interface are preserved along entire space. We note that the new idea in this kind of problem is the radial extension of the flux which enables us precise local control of the entropy solutions. 

We then consider a vanishing viscosity approximation for this transformed equation, using a smoothed Heaviside function to regularize the flux discontinuity. Thanks to the entropy stability result of \cite{AM}, the limit as $\eps\to0$ of these local solutions satisfies the entropy inequality \eqref{entropy-ineq} in the region. From this local entropy condition one deduces (again following \cite{AM}) that the solution is $L^1$-unique in a neighborhood (forward light-cone) of the interface segment under consideration. Covering $\R^d$ by finitely many such neighborhoods (using (b) to avoid interference between different interfaces) and patching together the local uniqueness results, we conclude that the limit solution is globally unique. By a standard contraction argument, it is also $L^1$-stable, hence it coincides with the unique entropy solution of \eqref{eqmain} satisfying \eqref{entropy-ineq} on the entire time axis.

In the remainder of the paper, we consider the scalar conservation law with heterogeneous flux~\eqref{eqmain}, approximated by the standard vanishing viscosity method with smoothed flux. It is known \cite{panov_arma} that such an approximation converges, along a subsequence, to a weak solution of~\eqref{eqmain}. The main difficulty lies in the uniqueness of this limit: it is not known whether the limit is unique, or whether different subsequences may converge to distinct weak solutions of~\eqref{eqmain}. While we cannot resolve this uniqueness question, we construct the so-called $(\eps_k)$-vanishing viscosity germ ${\mathcal G}_{(\eps_k)}$, defined as the set of all limit functions obtained (along subsequences $\eps_k \to 0$) from the viscous approximations of~\eqref{eqmain}.  

Invoking the non-degeneracy condition~\eqref{non-deg} together with the compactness results of Panov~\cite{panov_arma}, we prove that there exists at least one subsequence $(\eps_k)$ such that ${\mathcal G}_{(\eps_k)}$ contains a weak solution for \emph{every} initial datum $u_0 \in L^1 \cap L^\infty$ with $a \leq u_0 \leq b$. In other words, for a suitably chosen vanishing sequence $\eps_k \downarrow 0$, the corresponding limit defines a \emph{complete} germ of solutions: for every admissible initial state $u_0$, the vanishing viscosity approximation converges along the sequence $(\eps_k)$. Moreover, exploiting the $L^1$-contraction property of the parabolic approximations, we show that this germ is $L^1$-stable (contractive) with respect to the initial data. By the very construction of ${\mathcal G}_{(\eps_k)}$, there is a uniquely selected solution associated with each initial state---uniqueness holds \emph{within} the chosen germ.  

We note that the notion of germ was introduced in~\cite{AKR}, where it was determined by a set of admissible connections across a discontinuity interface. Here, our notion is somewhat different: we call the germ the set of functions generated by a specific viscosity approximation together with a countable set of initial data (which suffices to generate all other solutions and thus qualifies as a genuine germ).

The paper is organized as follows. After the introduction, in Section~2 below, we deal with the structured discontinuity case: assuming (a)--(c) we verify the entropy inequality~\eqref{entropy-ineq} and conclude the uniqueness (and continuous dependence) of entropy-admissible solutions. Section~3 contains the construction of the $(\eps_k)$-vanishing viscosity germ and the proof of existence and $L^1$-stability of solutions.

We shall finish the introduction by a short historical overview of the theory of scalar conservation laws with discontinuous flux.

\subsection*{Historical overview}

The theory of scalar conservation laws with \emph{continuous} flux rests on the foundational BV and entropy frameworks of Vol'pert and Kru\v{z}kov: BV calculus and traces \cite{Volpert1967} and the $L^1$ well--posedness theory for bounded entropy solutions \cite{Kru}. The multidimensional kinetic formulation from \cite{LPT94} provided a powerful linearization and compactness device and, in particular, a route to averaging lemmas and stability.

When the flux becomes \emph{spatially discontinuous}, classical Kru\v{z}kov entropies are no longer sufficient to enforce uniqueness because they do not encode interface transmission conditions. Early one--dimensional analyses already revealed this phenomenon in oil–reservoir and sedimentation models, beginning with the works \cite{GimseRisebro1992, Diehl1995, DiehlCMP1996}  on Cauchy and Riemann problems with discontinuous flux and on point sources and discontinuous flux. 

Two broad (and complementary) approaches emerged to recover well--posedness.

\smallskip
\emph{(i) Interface entropy/germ viewpoints.}
In one dimension, \cite{AudussePerthame2005} introduced \emph{adapted entropies} tailored to the given discontinuity and proved uniqueness without BV or trace assumptions (see also \cite{KM-MH, pan-jhde-ap, picolli}). A parallel and influential line due to \cite{AdimurthiJNA2004, AdimurthiJHDE2005, AdimurthiNHM2007} developed \emph{optimal entropy solutions} and robust Godunov-type approximations for discontinuous fluxes, yielding existence, stability and convergence for a wide class of connections . We note that some situations analyzed there generate alternative singular $\delta$-type solutions \cite{KMN-JDE, KN, ajlan}.  In \cite{GNPT2007}, the authors recast the problem in terms of \emph{Riemann solvers} and characterized those producing existence and uniqueness. These works crystallized the idea that a selection rule at each interface (an “interface germ”) underlies well--posedness.

In \cite{AKR}, the authors axiomatized this selection via \emph{$L^1$--dissipative germs} and obtained a general one--dimensional well--posedness theory encompassing vanishing viscosity and other regularizations . For multidimensional fluxes that jump across finitely many smooth hypersurfaces, we formulated entropy conditions adapted to the geometry and proved well--posedness by localization and patching via a Kato inequality \cite{AM}.

\smallskip
\emph{(ii) Kinetic/trace formulations.}
A second strand, initiated in the smooth-flux setting by \cite{LPT94}, evolved to handle discontinuous fluxes through kinetic formulations and fine trace theory on the flux jump set. The results from \cite{crasta1,crasta2} established existence of strong traces for entropy solutions on the rectifiable discontinuity set and a \emph{generalized Kato inequality} with a measure concentrated on that set, leading to uniqueness once an interface dissipation rule is prescribed. For additional related existence/uniqueness mechanisms with discontinuous coefficients, see also \cite{BachmannVovelle2006, KlingenbergRisebro1995}.

\smallskip
\emph{Existence via compactness.}
On the existence side, a strong precompactness of vanishing-viscosity approximations for fluxes that are $BV$ in $x$ and $C^1$ in $u$, under a genuine nonlinearity assumption; this yields entropy solutions for very general discontinuous fluxes was proved in \cite{panov_arma}. Combining the mentioned compactness result with either interface germs/entropies \cite{AdimurthiJHDE2005,AKR,AM,AudussePerthame2005} or the kinetic/trace approach \cite{crasta1,crasta2} furnishes a robust strategy to well--posedness: construct limits by vanishing viscosity and prove that the limits satisfy an interface entropy inequality strong enough to recover Kato contraction. The parabolic stability machinery in convection–diffusion with discontinuous coefficients \cite{KRT} and subsequent numerical analyses (e.g.\ \cite{Andreianov2020,AndreianovDiscConstraint2010}) provide complementary tools for stability and approximation.

The list of references is far from exhausted, but we believe that it is enough to get an impression of the actuality and importance of the theme.

\section{Entropy admissible solutions}

In this section, we additionally assume that the flux locally has the representation \eqref{local-flux} and that it satisfies conditions (a), (b), and (c) from the introduction. 

We are not going to work directly with the vanishing viscosity approximation since we cannot control simultaneously the vanishing viscosity term and the flux discontinuity.
The basic idea here is that in the vanishing viscosity approach, the parabolic regularization disappears. Keeping this in mind, we fix the ball $B(\mx_l,r_l)$ and introduce the change of variables $\tilde{\mx}: B(\mx_l,r_l)\to \tilde{\mx}(B(\mx_l,r_l))$ as follows (assuming for definitness that $j=1$)
\begin{equation}
\label{cov}
\tilde{x}_1=x_1-\zeta_l(\hat{x}_1), \ \ (\tilde{x}_2,\dots,\tilde{x}_d)=(x_2,\dots,x_d).    
\end{equation}  In the set $\tilde{\mx}(B(\mx_l,r_l))$, the equation has the form
\begin{equation}
\label{vv-3}
       \begin{split}
    &\pa_t u + \partial_{\tilde{x}_1}\Big(\Big(f^1_L(\tilde{\mx},u)-\sum\limits_{k=2}^d \frac{\pa \zeta_l}{\pa \tilde{x}_k} f^k_L(\tilde{\mx},u)\Big) H(-\tilde{x}_1) \\&\qquad\qquad\qquad  + \Big(f^1_R(\tilde{\mx},u)-\sum\limits_{k=2}^d \frac{\pa \zeta_l}{\pa \tilde{x}_k} f^k_R(\tilde{\mx},u) \Big) H(\tilde{x}_1)\Big) \\& \ \ + \sum\limits_{k=2}^{d}\partial_{\tilde{x}_k}\left( f^k_L(\tilde{\mx},u) H(-\tilde{x}_1) + f^k_R(\tilde{\mx},u)H(\tilde{x}_1)\right)=0.
    \end{split}
\end{equation} Next, we denote $\tilde{\mx}_l=\tilde{\mx}(\mx_l)$ fix a ball $B(\tilde{\mx}_l, R_l)\subset \tilde{\mx}(B(\mx_l,r_l))$ and extend the coefficients radially to the entire space. To be more precise, we set (without changing the notation)
$$
f^k_{L,l}(\tilde{\mx},\lambda)=
\begin{cases}
f^k_{L,l}(\tilde{\mx},\lambda), & \tilde{\mx} \in B(\tilde{\mx}_l,R_l)\\
f^k_{L,l}(R_l\frac{\tilde{\mx}}{|\tilde{\mx}|},\lambda), & \tilde{\mx} \notin B(\tilde{\mx}_l,R_l)\}
\end{cases}
$$ and similarly for all the other coefficients. Then, for a smooth non-negative non-decreasing function $\omega$ such that
\begin{equation}
\label{omega}    
\omega(z)=\begin{cases}
    1, & z>1\\
    0, &z<-1,
\end{cases}
\end{equation} we consider the vanishing viscosity regularization

\begin{equation}
\label{vv-4}
       \begin{split}
    &\pa_t u_\eps + \partial_{\tilde{x}_1}\Big(\Big(f^1_L(\tilde{\mx},u_\eps)-\sum\limits_{k=2}^d \frac{\pa \varphi}{\pa \tilde{x}_k} f^k_L(\tilde{\mx},u_\eps)\Big) \omega(-\tilde{x}_1/\eps) \\&\qquad\qquad\quad  + \Big(f^1_R(\tilde{\mx},u_\eps)-\sum\limits_{k=2}^d \frac{\pa \zeta_l}{\pa \tilde{x}_k} f^k_R(\tilde{\mx},u_\eps) \Big) \omega(\tilde{x}_1/\eps)\Big) \\& \ \ + \sum\limits_{k=2}^{d}\partial_{\tilde{x}_k}\left( f^k_L(\tilde{\mx},u_\eps) \omega(-\tilde{x}_1/\eps) + f^k_R(\tilde{\mx},u_\eps)\omega(\tilde{x}_1/\eps)\right)=\eps \Delta u_\eps.
    \end{split}
\end{equation} By \cite{AM}, we know that the limit of $(u_\eps)$, say $u_l$, satisfies the entropy conditions \eqref{entropy-ineq} with the radially extended coefficients. In particular, in the set $\tilde{\mx}^{-1}(B(\tilde{\mx}_l, R_l))$, the function $u_l$ satisfies \eqref{eqmain}. 

Let us now describe the procedure providing admissibility conditions for the original equation.

\begin{itemize} 

\item[(i)] we fix $\epsilon>0$ such that (b) from the introduction is satisfied and the appropriate neighborhood of $Q_1$ denoted by $Q_1^\epsilon$;

\item[(ii)] for every $\mx_l\in \R^d\setminus Q_1^\epsilon$, we fix a ball $B(\mx_l,r_l)$ in which we have the representation \eqref{local-flux} with $j=1$;

\item[(iii)] we introduce the change of variables \eqref{cov} in the ball $B(\mx_l,r_l)$; 

\item[(iv)] we extend the coefficients of the such obtained equation \eqref{vv-3} radially out the ball $B(\tilde{\mx}_l,R_l)\subset \tilde{\mx}(B(\mx_l,r_l))$ where $\tilde{\mx}_l=\tilde{\mx}(\mx_l)$;

\item[(v)] we consider the vanishing viscosity approximation of \eqref{vv-3} by keeping in mind that we extended the coefficients radially thus preserving its Lipschitz continuity. 

\item[(vi)] we have the equation with the discontinuity hyper-plane $\tilde{x}_1=0$ and we can directly apply the results from \cite{AM} providing existence of entropy admissible solution $u^l$ to \eqref{vv-3};

\item[(vii)] in the ball $B(\tilde{\mx}_l,R_l)$, the constructed solution satisfies \eqref{eqmain} as well as the entropy admissibility conditions \eqref{entropy-ineq};

\item[(viii)] the previous item provide the Kato inequality in that ball or any of its parts for any other function satisfying the same entropy admissibility condition in that ball or its corresponding parts;

\item[(ix)] this assures uniqueness in a cone containing $\tilde{\mx}_l$ implying the fact that any two functions constructed as described (for different  $\mx_l\in \R^d\setminus Q_1^\epsilon$) coincide in the intersection of the corresponding cones;

\item[(x)] since we can cover the space minus $Q_1^\epsilon$ by the basis of the cones and thus cover any relatively compact subset (minus $Q_1^\epsilon$) by a finite number of cones. The minimum of heights of the intersection of any pair of cone, say $t^1_0$, is the time up to which the solution is unique;

\item[(xi)] by repeating the procedure for $Q_j^\epsilon$ we get uniqueness in $(0,t^j_0)$, $j>1$, which implies uniqueness in the interval $(0,\min\limits_{j=1,\dots,d} t_j)$ (since $Q^j_\epsilon$ are disjoint);

\item[(xii)] we repeat the procedure with $u(t_0,\cdot)$ as the initial data, we are able to cover the entire temporal axis.

%\item[(xii)] we finally send $\epsilon\to 0$ to cover the entire space.

\end{itemize} Let us now formalize the procedure. 

\begin{theorem}
    Under the non-degeneracy assumption \eqref{non-deg}, conditions (a), (b), and (c),  and local representation of the flux \eqref{local-flux}, there exists an entropy admissible solution $u$ to \eqref{eqmain} in the sense that there exists a bounded function $p_u:\R^+\times\R^d\to \R$, $p_u=u$ almost everywhere with respect to the Lebesgue measure, such that for every $\lambda\in \R$, the following distributional inequality holds: 

\begin{equation}
    \label{entropy-ineq}
    \pa_t |u-\lambda|+\Div \sgn(u-\lambda) \big(\mff(\mx,u)-\mff(\mx,\lambda) \big) + \sgn(p_u-\lambda) \Div \mff(\mx,\lambda) \leq 0.
\end{equation} 
\end{theorem}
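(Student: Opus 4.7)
The plan is to implement the twelve-step procedure (i)--(xii) outlined above, which builds a global entropy admissible solution by patching together local solutions obtained from flattened-interface approximations. Fix $j\in\{1,\dots,d\}$ and $\mx_l \notin Q_j^\epsilon$; assumption (a) gives a ball $B(\mx_l,r_l)$ in which the flux has a single smooth discontinuity interface $\{x_j=\zeta_l(\hat{x}_j)\}$. First, perform the change of variables \eqref{cov} to straighten this interface, arriving at the local equation \eqref{vv-3} on $\tilde{\mx}(B(\mx_l,r_l))$ with flat discontinuity along $\{\tilde{x}_1=0\}$.

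The next step is to extend the transformed coefficients radially from a smaller ball $B(\tilde{\mx}_l,R_l)$ to all of $\R^d$, preserving Lipschitz regularity and the flat interface structure. This is the essential technical device: on the resulting global problem, the regularized equation \eqref{vv-4}, with the Heaviside replaced by the smoothed $\omega(\tilde{x}_1/\eps)$ together with the diffusion $\eps\Delta u_\eps$, falls within the framework of \cite{AM}. Consequently, its vanishing viscosity limit $u^l$ is well defined on the whole space and satisfies the entropy inequality \eqref{entropy-ineq} for the radially extended flux. Restricted to $B(\tilde{\mx}_l,R_l)$, where the coefficients coincide with the original ones, $u^l$ is an entropy admissible solution to \eqref{vv-3}; pulling back by \eqref{cov} yields a local entropy admissible solution of \eqref{eqmain} in the preimage of $B(\tilde{\mx}_l,R_l)$.

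To build the global solution, I then invoke the Kato-type inequality and finite propagation speed from \cite{AM}: any two locally constructed solutions $u^l$ and $u^{l'}$ satisfying \eqref{entropy-ineq} on a common domain must coincide inside the forward light-cone based on their overlap, where assumption (c) is crucial because it guarantees that the interface representation, and hence the equation, agrees on $B(\mx_l,r_l)\cap B(\mx_{l'},r_{l'})$. Covering a relatively compact subset of $\R^d\setminus Q_j^\epsilon$ by finitely many such balls yields a common time $t_0^j>0$ up to which the patched solution is unique; since the $Q_j^\epsilon$ are disjoint by (b), repeating this for each $j$ produces a solution on $(0,\min_j t_0^j)\times\R^d$, and iterating from $u(\min_j t_0^j,\cdot)$ extends to all of $\R^+$.

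The main obstacle I anticipate is making the patching rigorous. First, one must verify that the radial extension does not corrupt the entropy inequality inside $B(\tilde{\mx}_l,R_l)$: the extension alters the coefficients outside this ball, but \eqref{entropy-ineq} is a local distributional condition and finite propagation of the Kato contraction confines the effect of the modification to the exterior. Second, a consistent global choice of the trace-representative $p_u$ must be extracted from the family $\{p_{u^l}\}$: each local solution carries its own strong trace across $\{\tilde{x}_1=0\}$, and one has to check that these agree on overlaps so that a well-defined $p_u=u$ a.e.\ satisfying \eqref{entropy-ineq} globally exists. The non-degeneracy assumption \eqref{non-deg} together with the strong trace theory of \cite{crasta2} should suffice for this, but the bookkeeping across many balls and multiple interface families $Q_j$ is the delicate part of the argument.
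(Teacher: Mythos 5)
Your proposal follows essentially the same route as the paper: change of variables \eqref{cov} to flatten the interface, radial extension of the coefficients, the smoothed-Heaviside vanishing viscosity approximation \eqref{vv-4} handled via \cite{AM}, and patching of the local solutions through the Kato inequality on intersections of finite-speed cones, iterated over the disjoint sets $Q_j^\epsilon$ and in time. The only substantive difference is that the paper carries out the cone-intersection uniqueness explicitly with a Kru\v{z}kov-type cutoff built from $\omega$ and a Gronwall argument, whereas you state it as a consequence of \cite{AM}; the overall argument is the same.
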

\begin{proof}
Let us first denote
\begin{equation*}
\begin{split}
F_L^1(\tilde{\mx},\lambda) &= f^1_L(\tilde{\mx},\lambda)-\sum\limits_{k=2}^d \frac{\pa \zeta_l}{\pa {x}_k} f^k_L(\tilde{\mx},\lambda)\\
F_R^1(\tilde{\mx},\lambda) &= f^1_R(\tilde{\mx},\lambda)-\sum\limits_{k=2}^d \frac{\pa \zeta_l}{\pa {x}_k} f^k_R(\tilde{\mx},\lambda)
\end{split}
\end{equation*} Then, we follow the procedure given in (i)-(xii). This means that for every $\mx_l\in \R^d\setminus Q_{\epsilon/2}$, we can find the function $u^l\in L^\infty(\R^+\times \R^d)$ solving the radially extended equation \eqref{vv-3} and satisfying the entropy admissibility conditions
    \begin{equation}
        \label{ent-adm-transf}
        \begin{split}
        &\pa_t |u^l-\lambda|\\
        &+\pa_{\tilde{x}_1}\Big(\sgn(u^l-\lambda)\big(F_L(\tilde{\mx},u^l)-F_L(\tilde{\mx},\lambda))H(-\tilde{x}_1)+F_R(\tilde{\mx},u^l)-F_R(\tilde{\mx},\lambda))H(\tilde{x}_1) \big) \Big) \\
        &+\sum\limits_{k=2}^{d}\partial_{\tilde{x}_k}\Big(\sgn(u^l-\lambda)\left( (f^k_L(\tilde{\mx},u^l)-f^k_L(\tilde{\mx},\lambda) ) H(-\tilde{x}_1) + (f^k_R(\tilde{\mx},u^l)- f^k_R(\tilde{\mx},\lambda))H(\tilde{x}_1)\right)\\
        &+\sgn(u^l-\lambda)\big(\pa_{\tilde{x}_1}F_L(\tilde{\mx},\lambda)H(-\tilde{x}_1)+\pa_{\tilde{x}_1}F_R(\tilde{\mx},\lambda)H(\tilde{x}_1) \big)\\
        &+\sum\limits_{k=2}^{d} \sgn(u^l-\lambda) \Big(\partial_{\tilde{x}_k} f^k_L(\tilde{\mx},\lambda) H(-\tilde{x}_1) + \partial_{\tilde{x}_k} f^k_R(\tilde{\mx},\lambda)H(\tilde{x}_1)\Big)
        \\&-\sgn(p_u(t,\hat{\tilde{x}}_1)-\lambda) (F_R(\tilde{\mx},\lambda)-F_L(\tilde{\mx},\lambda))\delta(x_1)\leq 0 \ \ {\rm in} \ \ {\cal D}'(\R^+\times \R^d),
        \end{split}
\end{equation} for a bounded function $p_u:\R^+\times \R^d\to \R$.  Keep in mind that we have radially extended functions here. In order to get information on the original equation, we should restrict the support of the test functions employed in \eqref{ent-adm-transf} to the set $\tilde{\mx}(B(\mx_l,r_l))$. Next, we denote
$$
N=N_M(R)=\max\limits_{\substack{\mx \in B(0,R)\\ |\lambda|\leq M}} \Big[ \sum\limits_{j=1}^d (|\pa_\lambda f_L^j(\mx,\lambda)|^2+|\pa_\lambda f_R^j(\mx,\lambda)|^2 ) \Big]^{1/2}
$$ and consider the cones
$$
{\cal K}(\mx_l, R_l)=\{(t,\mx): \, |{\mx}-\tilde{\mx}_l|<R-Nt, \ \ t\in [0,RN^{-1})\}, \ \ \tilde{\mx}_l=\tilde{\mx}(\mx_l)
$$ for $R_l$ small enough such that
$$
{\cal S}_0(\mx_l, R_l)\subset \tilde{\mx}(B(\mx_l,r_l)) 
$$ where we denoted by ${\cal S}_\tau(\mx_l, R_l)$ the cross-section of the cone with the plane $t=\tau$ (i.e. it is a hyper-ball centered at $\tilde{\mx}_0)$ with the radius $R_l$). 

For a fixed relatively compact set $K\subset\subset \R^d$, the cones basis ${\cal S}_0(\mx_l, R_l)$, $\mx_l\in \bar{K}\setminus Q^1_{\epsilon/2}$ cover entire $K\setminus Q^1_{\epsilon/2}$. Since $\bar{K}\setminus Q^1_{\epsilon/2}$ is compact, we can extract its finite covering which corresponds to the sequence of cones
\begin{equation}
\label{finite}    
{\cal K}(\mx_l, R_l), \ \ l=1,\dots,n.
\end{equation}  and denote by $t_1$ the maximal height of the cross-sections such that
$$
 [0,t_1)\times K \setminus Q_{\epsilon} \subset \cup_{j=1}^n \{(t,\mx): \, |{\mx}-\tilde{\mx}_l|<R-Nt, \ \ t\in [0,t_1)\}.
$$ The time $t_1$ is actually the minimum between the heights at which two cones with non-empty intersection intersect and maximum of the heights at which cones intersect the cylinder $Q_{\eps}$. To this end, let us consider the intersection of two fixed cones whose intersection is non-empty. Let us denote them by ${\cal K}(\mx_i, R_i)$ and ${\cal K}(\mx_j, R_j)$ and the corresponding functions satisfying \eqref{ent-adm-transf} at ${\cal K}(\mx_i, R_i)$ and ${\cal K}(\mx_j, R_j)$, we denote by $u^i$ and $u^j$, respectively. In the intersection
$$
{\cal K}(\mx_i, R_i) \cap {\cal K}(\mx_j, R_j), 
$$ we have the Kato inequality satisfied
\begin{equation}
    \label{kato-int}
    \begin{split}
        &\pa_t |u^i-u^j|\\
        &+\pa_{\tilde{x}_1}\Big(\sgn(u^i-u^j)\big(F_L(\tilde{\mx},u^i)-F_L(\tilde{\mx},u^j))H(-\tilde{x}_1)+F_R(\tilde{\mx},u^i)-F_R(\tilde{\mx},u^j))H(\tilde{x}_1) \big) \Big) \\
        &+\sum\limits_{k=2}^{d}\partial_{\tilde{x}_k}\Big(\sgn(u^i-u^j)\left( (f^k_L(\tilde{\mx},u^i)-f^k_L(\tilde{\mx},u^j) ) H(-\tilde{x}_1) + (f^k_R(\tilde{\mx},u^i)- f^k_R(\tilde{\mx},u^j))H(\tilde{x}_1)\right)\\
        &+\sgn(u^i-u^j) \big(\pa_{\tilde{x}_1}F_L(\tilde{\mx},\lambda)|_{\lambda=u^i}-\pa_{\tilde{x}_1}F_L(\tilde{\mx},\lambda)|_{\lambda=u^j} \big)H(-\tilde{x}_1)\\& +\sgn(u^i-u^j) \big( \pa_{\tilde{x}_1}F_R(\tilde{\mx},\lambda)|_{\lambda=u^i} -\pa_{\tilde{x}_1}F_R(\tilde{\mx},\lambda)|_{\lambda=u^j} \big) H(\tilde{x}_1)\\
        &+\sum\limits_{k=2}^{d} \sgn(u^i-u^j) \big(\partial_{\tilde{x}_k} f^k_L(\tilde{\mx},\lambda)|_{\lambda=u^i}-\partial_{\tilde{x}_k} f^k_L(\tilde{\mx},\lambda)|_{\lambda=u^j}\big) H(-\tilde{x}_1)
        \\& + \sum\limits_{k=2}^{d} \sgn(u^i-u^j) \big( \partial_{\tilde{x}_k} f^k_R(\tilde{\mx},\lambda)|_{\lambda=u^i} - \partial_{\tilde{x}_k} f^k_R(\tilde{\mx},\lambda)|_{\lambda=u^j} \big) H(\tilde{x}_1)       
        \leq 0,
        \end{split}
\end{equation} in ${\cal D}'({\cal K}(\mx_i, R_i) \cap {\cal K}(\mx_j, R_j))$.

Indeed, if the cones intersection does not contain the discontinuity line, then the conclusion follows from the standard Kruzhkov procedure \cite{Kru}. If the continuity line is present, then the conclusion follows from \cite{AM} and the fact that, by the assumption (c) in the introduction, the representation of continuity line coincides in the neighborhoods $B(\mx_l,r_l)$ whose intersection contains the discontinuity line. 

Let us now derive that $u^i=u^j$ almost everywhere in ${\cal K}(\mx_i, R_i) \cap {\cal K}(\mx_j, R_j)$. We are adapting the standard Kruzhkov procedure from \cite{Kru}. For the function $\omega$ from \eqref{omega}, we denote
$$
\chi_\eps(t,\tilde{\mx})=1-\omega\left(\frac{|\tilde{\mx}-\tilde{\mx}_i|+Nt-R_i+\eps}{\eps}\right)\omega\left(\frac{|\tilde{\mx}-\tilde{\mx}_j|+Nt-R_j+\eps}{\eps}\right) 
$$and note that $\chi(t,\tilde{\mx})\equiv 0$ outside the intersection of the cones ${\cal K}(\mx_i, R_i) \cap {\cal K}(\mx_j, R_j)$. Moreover, we have
\begin{equation*}
\begin{split}
    &0=\pa_t \chi_\eps +\sum\limits_{k=1}^d N |\omega\left(\frac{|\tilde{\mx}-\tilde{\mx}_i|+Nt-R_i+\eps}{\eps}\right) \pa_{\tilde{x}_k} \omega\left(\frac{|\tilde{\mx}-\tilde{\mx}_j|+Nt-R_j+\eps}{\eps}\right)|\\&\qquad\quad+\sum\limits_{k=1}^d N|\omega\left(\frac{|\tilde{\mx}-\tilde{\mx}_j|+Nt-R_j+\eps}{\eps}\right) \pa_{\tilde{x}_k} \omega\left(\frac{|\tilde{\mx}-\tilde{\mx}_i|+Nt-R_i+\eps}{\eps}\right)|\\& \geq \pa_t \chi_\eps + \sum\limits_{k=1}^d \Big( \frac{f_L^k(\tilde{\mx},u_i)-f_L^k(\tilde{\mx},u_j)}{u_i-u_j}H(-\tilde{x}_1) +\frac{f_R^k(\tilde{\mx},u_i)-f_R^k(\tilde{\mx},u_j)}{u_i-u_j} H(\tilde{x}_1)\Big)\pa_{\tilde{x}_k} \chi_\eps.
\end{split}
\end{equation*} Keeping this in mind and inserting the test function
$$
\big(\omega((t-\rho)/\eps)-\omega((t-\tau)/\eps)\big)\chi(t,\tilde{\mx}) 
$$ into \eqref{kato-int}, we conclude (we denote ${\cal K}_i={\cal K}(\mx_i, R_i)$ and ${\cal K}_j={\cal K}(\mx_j, R_j)$)
\begin{equation*}
    \begin{split}
        &\int_0^T\int_{{\cal K}_i \cap {\cal K}_j} \Big(\frac{1}{\eps}\omega'\big(\frac{t-\rho}{\eps} \big)- \frac{1}{\eps}\omega'\big(\frac{t-\tau}{\eps} \big) \Big) \chi_\eps(t,\tilde{\mx}) |u^i-u^j| \, d\tilde{\mx} dt 
        \\& \leq 2d C\int_0^T\int_{\R^d} \big(\omega((t-\rho)/\eps)-\omega((t-\tau)/\eps)\big)\chi(t,\tilde{\mx}) |u^i-u^j| \, d\tilde{\mx} dt  
    \end{split}
\end{equation*} where 
$$
C=\max\limits_{\substack{\mx \in B(0,R)\\ |\lambda|\leq M}} \Big[ \sum\limits_{j=1}^d (|\pa^2_{x_j\lambda} f_L^j(\mx,\lambda)|^2+|\pa^2_{x_j\lambda} f_R^j(\mx,\lambda)|^2 ) \Big]^{1/2}.
$$ After letting $\tau\to 0$ here (keep in mind that we have the strong trace of $u^i$ and $u^j$ at every level $t=t_0$ and at $t_0=0$ as well \cite{crasta2, pan_traces}) and using the Gronwal lemma, we conclude that
$$
u^i(t,\tilde{\mx})=u^j(t,\tilde{\mx}) \text{  for a.e.  } (t,\tilde{\mx})\in  {\cal K}_i \cap {\cal K}_j.
$$ From here, we see that we have a function $u$ which coincides with $u^l$ on every ${\cal K}(\mx_l,R_l)$, $l=1,\dots,n$, from \eqref{finite} and satisfies the entropy conditions \eqref{ent-adm-transf} (with the original coefficients) for $t<t_{01}$ for $t_{01}$ small enough, and outside $Q_1^\epsilon\times (0,t_{01})$. Moreover, such a function is unique.

Now, we repeat the procedure starting with the set $Q^2_{\epsilon/2}$ and considering the local representation of the discontinuity manifold of the form $x_2=\zeta_l(\hat{x}_2)$. We shall obtain the function $u$ satisfying the entropy conditions \eqref{ent-adm-transf} (with the original coefficients) for $t<t_{02}$ for $t_{02}$ small enough, and outside $Q_2^\epsilon\times (0,t_{02})$. 

We note that the functions constructed in this two steps coincide on $K\setminus (Q_1^\epsilon\cup Q_2^\epsilon)$ and thus can be extended to $Q_1^\epsilon$ and $Q_2^\epsilon$ using the functions constructed in the second and first steps, respectively, so that they satisfy the entropy admissibility conditions.

Thus, we have obtained the unique solution on $K\setminus (Q_1^\epsilon\cap Q_2^\epsilon)$ on $[0,\min\{t_{01},t_{02}\})$ since the function $u$ constructed in the first part out of $Q_1^\epsilon$ covers $Q_2^\epsilon\setminus Q_1^\epsilon$ and the function constructed out of $Q_2^\epsilon$ covers $Q_1^\epsilon\setminus Q_2^\epsilon$. Continuing the procedure, we reach to the function $u$ satisfying \eqref{ent-adm-transf} on $K\setminus (\cap_{k=1}^d Q_k^\epsilon)=K$ on $[0,t_0)$ where $t_0=\min\limits_{k=1,\dots,d} t_{0k}$. Note that here we used the assumption (b) from the introduction.

Finally, repeating the procedure with the initial data $u(t_0,\mx)$ we conclude that there exists $u$ satisfying locally \eqref{ent-adm-transf}. By returning to the original variables $(t,\mx)$, we conclude the theorem. \end{proof}

The procedure from the previous theorem together with the results from \cite{AM} provides uniqueness of the entropy admissible solution. However, we conclude the result directly from  \cite{crasta1}.

\begin{corollary}
    Under the non-degeneracy assumptions \eqref{non-deg} and the assumptions (a), (b), and (c) from the introduction, there exists a unique solution to the Cauchy problem \eqref{eqmain} which satisfies entropy admissibility conditions \eqref{entropy-ineq}.
\end{corollary}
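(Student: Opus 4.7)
The plan is to split the corollary into existence and uniqueness: existence is immediate from the preceding theorem, which already produces a function $u$ satisfying~\eqref{entropy-ineq}, so the real content is uniqueness. My strategy would be to recycle the local flattening, radial extension, and cone-covering machinery from the proof of that theorem, but now apply it to the difference of two candidate solutions in order to derive a global Kato-type inequality and close by Gronwall.

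Concretely, given two admissible solutions $u,v$ with the same initial datum $u_0$, I would fix a relatively compact $K\subset\R^d$ and $\epsilon$ as in assumption~(b), and establish the local inequality
$$
\pa_t|u-v|+\Div\,\sgn(u-v)\bigl(\mff(\mx,u)-\mff(\mx,v)\bigr)\leq 0
$$
in the distributional sense on each cone ${\cal K}(\mx_l,R_l)$ used in the preceding theorem. Away from $\bigcup_j Q_j^{\epsilon/2}$ the flux is $C^1$ in $\mx$, and Kru\v{z}kov's doubling of variables~\cite{Kru} yields this directly. In a chart crossing some $Q_j$, I would apply the single-interface $L^1$-contraction of~\cite{AM} to the flattened problem, exploiting that the radial extensions agree with the original flux on the sub-ball of interest. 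Assumption~(b) lets the interfaces be treated one at a time and assumption~(c) ensures the flattenings are compatible on chart overlaps, so a partition of unity combined with the cone-cutoff $\chi_\eps$ from the proof of the theorem patches everything into a global inequality; Gronwall then gives $u=v$ a.e.~on $[0,t_0)\times K$, and iterating in time from $u(t_0,\cdot)$ covers all positive times.

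The hard part along this route is showing that, across each flattened interface $\{\tilde{x}_1=0\}$, the interface defect terms involving $p_u$ and $p_v$ in~\eqref{entropy-ineq} cancel (or at least have the right sign) when the two entropy inequalities are combined. This requires existence of strong one-sided traces of $u$ and $v$ on each $Q_j$ and that the pair of traces belongs to the same $L^1$-dissipative germ of~\cite{AM}; the non-degeneracy condition~\eqref{non-deg} supplies the traces through~\cite{crasta2, pan_traces}, and the germ structure then makes the interface contribution non-positive. A much shorter path, and the one the author adopts, is simply to observe that~\eqref{entropy-ineq} together with~\eqref{non-deg} brings the problem squarely within the kinetic-formulation framework of~\cite{crasta1}, from which uniqueness can be quoted directly without re-running the contraction argument.
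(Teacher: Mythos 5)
Your proposal matches the paper: the author gives no detailed argument for the corollary, noting only that uniqueness follows either from the cone-covering/Kato procedure of the preceding theorem combined with \cite{AM}, or---the route actually taken---by quoting the kinetic-formulation uniqueness result of \cite{crasta1} directly, which is exactly the ``shorter path'' you identify at the end. Your more detailed sketch of the contraction argument is a reasonable elaboration of the first alternative the paper mentions but does not carry out.
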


\section{$(\eps_k)$-vanishing viscosity germ}

We start by fixing a smooth $\mx$-approximation $\mff_\eps(\mx,\lambda)$ of the flux $\mff(\mx,\lambda)$ such that it still holds
$$
\mff_\eps(\mx,a)=\mff_\eps(\mx,b)=0.
$$Then, from the results in \cite{panov_arma}, thanks to the non-degeneracy conditions, we know that the vanishing viscosity approximation to \eqref{eqmain} admits $L^1_{loc}(\R^+\times \R^d)$-strongly converging subsequence. In other words, the family of solutions to

\begin{equation}
\label{eqmain-vv}
\begin{split}
\partial_t u_\eps &+ \Div \mff_\eps(\mx,u_\eps) = \eps \Delta u_\eps, \ \ \\
u\big|_{t=0}&= u_0(\mx) \in L^1(\R^d), \ \ a \leq u_0(\mx) \leq b, \ \ (t,\mx)\in \R^+\times \R^d,
\end{split}
\end{equation} admits a subsequence $(u_{\eps_k})$ strongly converging in $L^1_{loc}(\R^+\times \R^d)$-strongly converging subsequence.

In this section, we consider \eqref{eqmain} under the mere assumption that the flux belongs to $BV(\R^d;C^1(\R))$. We shall construct a complete germ consisting of limits of converging subsequences to the vanishing viscosity approximation \eqref{eqmain-vv}. To this end, with respect to the $L^1(\R^d)$-topology, we fix a countable dense set ${\cal C} \subset L^1(\R^d)\cap L^\infty(\R^d)$ of functions ranging between $a$ and $b$:
\begin{equation}
\label{cnt-set}
{\cal C}=\{ u^l_0\in L^1(K)\cap L^\infty(\R^d): \, a \leq u_0^k(\mx) \leq b, \ \ l\in \N, \, \mx\in \R^d\}.
\end{equation} For each $u|_{t=0}=u_0^l$, we solve the Cauchy problem \eqref{eqmain-vv} and get the family of solutions $(u_\eps^l)$. Since we assumed that the flux $\mff$ satisfies the non-degeneracy conditions \eqref{non-deg}, by \cite{panov_arma}, we can find a zero sequence $(\eps_k)$ such that for every $l\in \N$, the sequence $(u_{\eps_k}^l)$ converges in $L^1(K)$ toward a function $u_l\in L^1(K) \cap L^\infty(\R^d)$. Keeping this in mid, we shall look for the following stable class of solutions to \eqref{eqmain}.

\begin{definition}
    For the fixed zero sequence $(\eps_k)$, we say that the $(\eps_k)$-vanishing viscosity germ is the set ${\cal G}_{(\eps_k)}$ consisting of the functions $u\in L_{loc}^1(\R^d)\cap L^\infty(\R^d)$ such that for every $u_0\in L^1(\R^d)\cap L^\infty(\R^d)$, $a\leq u_0(\mx) \leq b$, $\mx \in \R^d$, there exists $u\in {\cal G}_{(\eps_k)}$ representing the weak solution to \eqref{eqmain} and obtained as the limit along the fixed subsequence $(\eps_k)$ to the vanishing approximation \eqref{eqmain-vv} with $\eps$ replaced by $\eps_k$.   
\end{definition}

From the latter definition, the following theorem is fairly straightforward corollary of the uniform stability of solutions to the parabolic approximations.

\begin{theorem}
    There exists a zero sequence $(\eps_k)$ The germ ${\cal G}_{(\eps_k)}$ is complete i.e. for every $u_0\in L^1(\R^d)\cap L^\infty(\R^d)$, $a\leq u_0(\mx) \leq b$, $\mx \in \R^d$, there exists $u\in {\cal G}_{(\eps_k)}$ satisfying \eqref{eqmain-vv} in the weak sense. Moreover, solutions from the germ ${\cal G}_{(\eps_k)}$ are stable.
\end{theorem}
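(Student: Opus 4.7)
The plan is to combine Panov's strong $L^1_{\rm loc}$ compactness result for the viscous approximations with a Cantor diagonal extraction across the countable dense set $\mathcal{C}$ of \eqref{cnt-set}, and then to propagate convergence from $\mathcal{C}$ to arbitrary admissible initial data by means of the $L^1$-contraction enjoyed by the parabolic approximations \eqref{eqmain-vv}.

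First I would construct the sequence $(\eps_k)$. For each $l\in\N$ let $(u^l_\eps)_{\eps>0}$ denote the family of solutions to \eqref{eqmain-vv} with datum $u^l_0\in\mathcal{C}$. By the maximum principle (guaranteed by \eqref{bounded} and the smoothing preserving this property) the family is uniformly bounded in $L^\infty$, and Panov's theorem \cite{panov_arma}, applied under \eqref{non-deg}, delivers an $L^1_{\rm loc}$-strongly convergent subsequence. A standard diagonal extraction then yields a single zero sequence $(\eps_k)$ along which $u^l_{\eps_k}\to u^l$ in $L^1_{\rm loc}(\R^+\times\R^d)$ for every $l\in\N$. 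Each limit $u^l$ is a weak solution of \eqref{eqmain}: the viscous term $\eps_k\Delta u^l_{\eps_k}$ vanishes distributionally, while strong $L^1_{\rm loc}$ convergence combined with $\mff_{\eps_k}\to\mff$ allows passage to the limit in the nonlinearity.

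Next, for an arbitrary admissible $u_0\in L^1(\R^d)\cap L^\infty(\R^d)$ with $a\le u_0\le b$, I would select $(u^{l_n}_0)\subset\mathcal{C}$ with $u^{l_n}_0\to u_0$ in $L^1(\R^d)$. The parabolic $L^1$-contraction, valid because $\mff_\eps$ is smooth in $\mx$ and $C^1$ in $\lambda$, gives
\[
\|u^{u_0}_\eps(t,\cdot)-u^{u^{l_n}_0}_\eps(t,\cdot)\|_{L^1(\R^d)}\le \|u_0-u^{l_n}_0\|_{L^1(\R^d)}
\]
uniformly in $\eps$ and $t$. Hence for any compact $K\subset\R^+\times\R^d$ and any $k,m$,
\[
\|u^{u_0}_{\eps_k}-u^{u_0}_{\eps_m}\|_{L^1(K)}\le 2|K|_t\,\|u_0-u^{l_n}_0\|_{L^1(\R^d)}+\|u^{l_n}_{\eps_k}-u^{l_n}_{\eps_m}\|_{L^1(K)},
\]
where $|K|_t$ denotes the temporal length of $K$. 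Letting first $k,m\to\infty$ (using the diagonal step) and then $n\to\infty$ proves that $(u^{u_0}_{\eps_k})_k$ is Cauchy in $L^1_{\rm loc}$. Its limit $u$ is therefore a weak solution of \eqref{eqmain} obtained as the $(\eps_k)$-vanishing viscosity limit, so $u\in\mathcal{G}_{(\eps_k)}$, which establishes completeness. Stability then follows by passing the same contraction to the limit: for any $u,v\in\mathcal{G}_{(\eps_k)}$ with initial data $u_0,v_0$,
\[
\|u(t,\cdot)-v(t,\cdot)\|_{L^1(\R^d)}\le \|u_0-v_0\|_{L^1(\R^d)}\qquad\text{for a.e. } t>0.
\]

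The main subtlety I expect lies in the bookkeeping between the two small parameters: the viscosity scale $\eps$ and the flux-smoothing scale. To keep the argument clean, the mollification $\mff_\eps$ should be fixed once and for all as a deterministic function of $\eps$, so that \eqref{eqmain-vv} becomes a genuinely single-parameter family to which Panov's compactness and the parabolic $L^1$-contraction apply simultaneously. Once this is arranged, the diagonal construction and the density argument interact without conflict, because the contraction estimate is uniform in the flux regularization level.
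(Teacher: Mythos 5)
Your proposal is correct and follows essentially the same route as the paper: a diagonal extraction over the countable dense set $\mathcal{C}$ using Panov's compactness, followed by the parabolic $L^1$-contraction to transfer convergence along $(\eps_k)$ from $\mathcal{C}$ to arbitrary admissible data, with stability obtained by passing the contraction to the limit. Your explicit triangle-inequality derivation of the Cauchy property of $(u^{u_0}_{\eps_k})$ is a slightly cleaner rendering of the paper's argument, and your remark about fixing the flux mollification as a deterministic function of $\eps$ simply makes explicit the single-parameter setup already implicit in \eqref{eqmain-vv}.
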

\begin{proof}
    Let us first show completness of the germ. To this end, we fix $u_0\in L^1(\R^d)\cap L^\infty(\R^d)$, $a\leq u_0(\mx) \leq b$, $\mx \in \R^d$, and the countable set ${\cal C}$ from \eqref{cnt-set}. The construction from the beginning of the section provides the zero sequence $(\eps_k)$ such that for every $u_0\in {\cal C}$, we have $u\in {\cal G}_{(\eps_k)}$.

    Let us now assume that $u_0\notin {\cal C}$. We take the sequence $(u_0^l)$, $u_0^l\in {\cal C}$, such that
    $$
    \lim\limits_{l\to \infty} \| u_0^l-u_0 \|_{L^1(\R^d)}=0.
    $$ We then consider the parabolic problems \eqref{eqmain-vv} with the initial data $u_0$ and $u_0^l$ with the solutions $u_{\eps_k}$ and $u_{\eps_k}^l$, respectively. By subtracting the two equations, we get
    $$
    \pa_t (u_{\eps_k}-u^l_{\eps_k})+\Div \big(\mff_{\eps}(\mx,u_{\eps_k})-\mff_{\eps}(\mx,u^l_{\eps_k}\big) =\eps_k \Delta (u_{\eps_k}-u^l_{\eps_k}). 
    $$ We multiply the equation by $\sgn(u_{\eps_k}-u^l_{\eps_k})$ and after standard manipulations, taking into account the Leibnitz rule and the fact that $(\big(\mff_{\eps}(\mx,u)-\mff_{\eps}(\mx,v)\big))\delta(u-v)=0$, we get in the sense of distributions
    \begin{equation}
    \label{kato-vv}
        \pa_t |u_{\eps_k}-u^l_{\eps_k}|+\Div \big( \sgn(u_{\eps_k}-u^l_{\eps_k}) \, \big(\mff_{\eps}(\mx,u_{\eps_k})-\mff_{\eps}(\mx,u^l_{\eps_k}\big) \big) \leq \eps_k \Delta |u_{\eps_k}-u^l_{\eps_k}|.
    \end{equation} We integrate give expression over $(0,t)\times \R^d$ and let $\eps_k\to 0$. We have by denoting $u^l$ the limit of $u_{\eps_k}^l$
    \begin{equation*}
        \lim\limits_{\eps_k\to 0}\int_{\R^d}|u_{\eps_k}(t,\mx)-u^l(t,\mx)|\, d\mx \leq \lim\limits_{\eps_k\to 0}\int_{\R^d} |u_{0}(\mx)-u_0^l(\mx)|\, d\mx. 
    \end{equation*} By letting $l\to \infty$ here, we see that $(u_{\eps_k})$ will have the $L^1(\R^d)$-limit if $(u^l)$ has the $L^1(\R^d)$-limit. However, it is easy to see that the sequence $(u^l)$ is the Cauchy sequence in $L^1(\R^d)$. Indeed, we can replace $u_{\eps_k}$ and $u_{\eps_k}^l$ by $u_{\eps_k}^{l_1}$ and $u_{\eps_k}^{l_2}$ in \eqref{kato-vv} and by integration over $(0,t)\times \R^d$ and letting $\eps_k\to 0$, we get
    \begin{equation}
    \label{stability-vv}
        \int_{\R^d}|u^{l_1}(t,\mx)-u^{l_2}(t,\mx)|\, d\mx \leq \int_{\R^d} |u^{l_1}_{0}(\mx)-u_0^{l_2}(\mx)|\, d\mx. 
    \end{equation} Since $(u_0^{l})$ is by construction convergent and thus Cauchy sequence, the proof is concluded.

    We finally note that the stability follows by replacing $u^{l_1}$ and $u^{l_2}$ by arbitrary $u,v \in {\cal G}_{(\eps_k)}$. 
\end{proof} A natural question here is how many different stable germs arising from the vanishing viscosity approximation we have. In one dimensional case when the discontinuity manifold is fixed, we know from \cite[Sect. 5]{AKR} that there exists a unique such a germ i.e. that entire vanishing viscosity approximation converges (provided non-degeneracy conditions). However, our assumptions seems to be too weak to ensure such a strong result. To this end, we strengthen our assumption in the next section in order to obtain an approximation independent description of admissible solutions.

\section*{Acknowledgements}\label{sec:thanks}
This work was supported in part by the Stand Alone project P 35508 of the Austrian Science Fund FWF.

\section*{Declarations}
\subsection*{Data Availability} Data sharing not applicable to this article as no datasets were generated or analysed during
the current study.

\subsection*{Conflict of interests}
The authors have no competing interests to declare that are relevant to the content of
this article.

\end{document}